\begin{document}

\title*{Higher-Order Calculus of Variations\\ on Time Scales}

\author{Rui A. C. Ferreira \and Delfim F. M. Torres}

\institute{Department of Mathematics, University of Aveiro,
3810-193 Aveiro, Portugal\\
\texttt{ruiacferreira@yahoo.com}, \texttt{delfim@ua.pt}}

\maketitle

%%%%%%%%%%%%%%%%%%%%%%%%%%%%%%%%%%%%%%%%%%%%%%

\begin{abstract}
We prove a version of the Euler-Lagrange equations for certain
problems of the calculus of variations on time scales with
higher-order delta derivatives.

\bigskip

\noindent \textbf{Keywords:} time scales, calculus of variations,
delta-derivatives of higher-order, Euler-Lagrange equations.

\bigskip

\noindent \textbf{2000 Mathematics Subject Classification:} 39A12, 49K05.
\end{abstract}

%%%%%%%%%%%%%%%%%%%%%%%%%%%%%%%%%%%%%%%%%%%%%%

\section{Introduction}

Calculus of variations on time scales (we refer the reader to
Section~\ref{sec:2} for a brief introduction to time scales) has
been introduced in 2004 in the papers by Bohner
\cite{CD:Bohner:2004} and Hilscher and Zeidan \cite{zeidan}, and
seems to have many opportunities for application in economics
\cite{Atici06}. In both works of Bohner and Hilscher\&Zeidan, the
Euler-Lagrange equation for the fundamental problem of the
calculus of variations on time scales,
\begin{equation}
\label{eq:EL:B}
\mathcal{L}[y(\cdot)]=\int_{a}^{b}L(t,y^\sigma(t),y^\Delta(t))\Delta t
\longrightarrow\min, \quad \mbox{$y(a)=y_a$, $y(b)=y_b$},
\end{equation}
is obtained (in \cite{zeidan} for a bigger class of admissible
functions and for problems with more general endpoint conditions).
Here we generalize the previously obtained Euler-Lagrange equation
for variational problems involving delta derivatives of more than
the first order, \textrm{i.e.} for \emph{higher-order problems}.

We consider the following extension to problem \eqref{eq:EL:B}:
\begin{equation}
\label{problema }
\begin{gathered}
\mathcal{L}[y(\cdot)]=\int_{a}^{\rho^{r-1}(b)}
L(t,y^{\sigma^r}(t),y^{\sigma^{r-1}\Delta}(t),\ldots,
y^{\sigma\Delta^{r-1}}(t),y^{\Delta^r}(t))\Delta t\longrightarrow\min, \\
y(a)=y_a,  \ \  y\left(\rho^{r-1}(b)\right)=y_b ,\\
\vdots\\ \tag{P} y^{\Delta^{r-1}}(a)=y_a^{r-1},\ \
y^{\Delta^{r-1}}\left(\rho^{r-1}(b)\right)=y_b^{r-1},
\end{gathered}
\end{equation}
where $y^{\sigma^i\Delta^{r-i}}(t)\in\mathbb{R}^n,\
i\in\{0,\ldots,r\}$, $n,\ r\in\mathbb{N}$, and $t$ belongs to a
time scale $\mathbb{T}$. Assumptions on the time scale
$\mathbb{T}$ are stated in Section~\ref{sec:2}; the conditions
imposed on the Lagrangian $L$ and on the admissible functions $y$
are specified in Section~\ref{secprinci}. For $r=1$ problem
(\ref{problema }) is reduced to (\ref{eq:EL:B}); for $\mathbb{T} =
\mathbb{R}$ we get the classical problem of the calculus of
variations with higher-order derivatives.

While in the classical context of the calculus of variations,
\textrm{i.e.} when $\mathbb{T} = \mathbb{R}$, it is trivial to
obtain the Euler-Lagrange necessary optimality condition for
problem (\ref{problema }) as soon as we know how to do it for
(\ref{eq:EL:B}), this is not the case on the time scale setting.
The Euler-Lagrange equation obtained in
\cite{CD:Bohner:2004,zeidan} for (\ref{eq:EL:B}) follow the
classical proof, substituting the usual integration by parts
formula by integration by parts for the delta integral
(Lemma~\ref{integracao partes}). Here we generalize the proof of
\cite{CD:Bohner:2004,zeidan} to the higher-order case by
successively applying the delta-integration by parts and thus
obtaining a more general delta-differential Euler-Lagrange
equation. It is worth to mention that such a generalization poses serious technical difficulties and that the obtained necessary optimality condition is not true on a general time scale, being necessary some restrictions on $\mathbb{T}$. Proving an Euler-Lagrange necessary optimality condition for a completely arbitrary time scale $\mathbb{T}$ is a deep and difficult open question.

The paper is organized as follows: in Section~\ref{sec:2} a brief
introduction to the calculus of time scales is given and some
assumptions and basic results provided. Then, under the assumed
hypotheses on the time scale $\mathbb{T}$, we obtain in
Section~\ref{secprinci} the intended higher-order
delta-differential Euler-Lagrange equation.

%%%%%%%%%%%%%%%%%%%%%%%%%%%%%%%%%%%%%%%%%%%%%%

\section{Basic Definitions and Results on Time Scales}
\label{sec:2}

A nonempty closed subset of $\mathbb{R}$ is called a \emph{time
scale} and it is denoted by $\mathbb{T}$.

The \emph{forward jump operator}
$\sigma:\mathbb{T}\rightarrow\mathbb{T}$ is defined by
$$\sigma(t)=\inf{\{s\in\mathbb{T}:s>t}\},
\mbox{ for all $t\in\mathbb{T}$},$$
while the \emph{backward jump operator}
$\rho:\mathbb{T}\rightarrow\mathbb{T}$ is defined by
$$\rho(t)=\sup{\{s\in\mathbb{T}:s<t}\},\mbox{ for all
$t\in\mathbb{T}$},$$ with $\inf\emptyset=\sup\mathbb{T}$
(\textrm{i.e.} $\sigma(M)=M$ if $\mathbb{T}$ has a maximum $M$)
and $\sup\emptyset=\inf\mathbb{T}$ (\textrm{i.e.} $\rho(m)=m$ if
$\mathbb{T}$ has a minimum $m$).

A point $t\in\mathbb{T}$ is called \emph{right-dense},
\emph{right-scattered}, \emph{left-dense} and
\emph{left-scattered} if $\sigma(t)=t$, $\sigma(t)>t$, $\rho(t)=t$
and $\rho(t)<t$, respectively.

Throughout the paper we let $\mathbb{T}=[a,b]\cap\mathbb{T}_{0}$
with $a<b$ and $\mathbb{T}_0$ a time scale
containing $a$ and $b$. 

\begin{remark}
The time scales $\mathbb{T}$ considered in this work have a maximum $b$ and, by definition, $\sigma(b) = b$. For example, let $[a,b] = [1,5]$ and $\mathbb{T}_0 = \mathbb{N}$: in this case $\mathbb{T}=[1,5]\cap\mathbb{N} = \left\{1,2,3,4,5\right\}$
and one has $\sigma(t) = t+1$, $t \in \mathbb{T} \backslash \{5\}$, $\sigma(5) = 5$.
\end{remark}

Following \cite[pp.~2 and 11]{livro}, we define $\mathbb{T}^k=\mathbb{T}\backslash(\rho(b),b]$,
$\mathbb{T}^{k^2}=\left(\mathbb{T}^k\right)^k$ and, more generally,
$\mathbb{T}^{k^n}=\left(\mathbb{T}^{k^{n-1}}\right)^k$, for
$n\in\mathbb{N}$. The following standard notation is used for
$\sigma$ (and $\rho$): $\sigma^0(t) = t$, $\sigma^n(t) = (\sigma
\circ \sigma^{n-1})(t)$, $n \in \mathbb{N}$.

The \emph{graininess function}
$\mu:\mathbb{T}\rightarrow[0,\infty)$ is defined by
$$\mu(t)=\sigma(t)-t,\mbox{ for all $t\in\mathbb{T}$}.$$

We say that a function $f:\mathbb{T}\rightarrow\mathbb{R}$ is
\emph{delta differentiable} at $t\in\mathbb{T}^k$ if there is a
number $f^{\Delta}(t)$ such that for all $\varepsilon>0$ there
exists a neighborhood $U$ of $t$ (\textrm{i.e.}
$U=(t-\delta,t+\delta)\cap\mathbb{T}$ for some $\delta>0$) such
that
$$|f(\sigma(t))-f(s)-f^{\Delta}(t)(\sigma(t)-s)|
\leq\varepsilon|\sigma(t)-s|,\mbox{ for all $s\in U$}.$$
We call $f^{\Delta}(t)$ the \emph{delta derivative} of $f$ at $t$.

If $f$ is continuous at $t$ and $t$ is right-scattered, then (see
Theorem~1.16 (ii) of \cite{livro})
\begin{equation}
\label{derdiscr} f^\Delta(t)=\frac{f(\sigma(t))-f(t)}{\mu(t)}.
\end{equation}

Now, we define the $r^{th}-$\emph{delta derivative}
($r\in\mathbb{N}$) of $f$ to be the function
$f^{\Delta^r}:\mathbb{T}^{k^r}\rightarrow\mathbb{R}$, provided
$f^{\Delta^{r-1}}$ is delta differentiable on $\mathbb{T}^{k^r}$.

For delta differentiable functions $f$ and $g$, the next formulas
hold:

\begin{equation}
\label{transfor}
\begin{aligned}
f^\sigma(t)&=f(t)+\mu(t)f^\Delta(t),\\
(fg)^\Delta(t)&=f^\Delta(t)g^\sigma(t)+f(t)g^\Delta(t)\\
&=f^\Delta(t)g(t)+f^\sigma(t)g^\Delta(t),
\end{aligned}
\end{equation}
where we abbreviate here and throughout $f\circ\sigma$ by
$f^\sigma$. We will also write $f^{\Delta^\sigma}$ as
$f^{\Delta\sigma}$ and all the possible combinations of exponents
of $\sigma$ and $\Delta$ will be clear from the context.

The following lemma will be useful for our purposes.

\begin{lemma}
\label{lem0} Let $t\in \mathbb{T}^k$ ($t\neq\min\mathbb{T}$)
satisfy the property $\rho(t)=t<\sigma(t)$. Then, the jump
operator $\sigma$ is not delta differentiable at $t$.
\end{lemma}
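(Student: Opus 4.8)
The plan is to argue by contradiction: suppose $\sigma$ is delta differentiable at such a point $t$, and derive a contradiction from the defining inequality of the delta derivative together with the fact that $t$ is right-scattered but left-dense.

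First I would record the two structural facts about $t$. Since $\rho(t)=t$, the point $t$ is left-dense, so there is a sequence of points of $\mathbb{T}$ approaching $t$ from below; equivalently, every left-neighborhood $(t-\delta,t]\cap\mathbb{T}$ contains points $s<t$. Since $\sigma(t)>t$, the point $t$ is right-scattered, so by \eqref{derdiscr} applied to the (continuous) function $\sigma$, \emph{if} $\sigma^\Delta(t)$ existed it would be forced to equal
\[
\sigma^\Delta(t)=\frac{\sigma(\sigma(t))-\sigma(t)}{\mu(t)}=\frac{\sigma^2(t)-\sigma(t)}{\mu(t)}.
\]
Call this candidate value $c$. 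The key point is that $c$ is a fixed finite number determined by $t$.

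Next I would feed left-dense test points into the definition of delta differentiability. The definition with $f=\sigma$ says: for all $\varepsilon>0$ there is $\delta>0$ with
\[
\bigl|\sigma(\sigma(t))-\sigma(s)-\sigma^\Delta(t)(\sigma(t)-s)\bigr|\le\varepsilon\,|\sigma(t)-s|
\]
for all $s\in(t-\delta,t+\delta)\cap\mathbb{T}$. Take $s<t$ with $s$ close to $t$ (possible by left-density). For such $s$ we have $\sigma(s)\le t$: indeed $s<t$ and $t\in\mathbb{T}$ force $\sigma(s)=\inf\{u\in\mathbb{T}:u>s\}\le t$. Actually I want the stronger statement that $\sigma(s)\to t$ as $s\to t^-$; this follows because $\sigma(s)\in[s,t]$ wedges $\sigma(s)$ between $s$ and $t$. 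Now substitute: using $\sigma^\Delta(t)=c=(\sigma^2(t)-\sigma(t))/\mu(t)$ and abbreviating $\sigma(t)=t+\mu(t)$, the left-hand side becomes
\[
\bigl|\sigma^2(t)-\sigma(s)-c\,(\sigma(t)-s)\bigr|.
\]
As $s\to t^-$ we have $\sigma(s)\to t$ and $\sigma(t)-s\to\sigma(t)-t=\mu(t)$, so the left-hand side tends to $|\sigma^2(t)-t-c\,\mu(t)| = |\sigma^2(t)-t-(\sigma^2(t)-\sigma(t))| = |\sigma(t)-t| = \mu(t)>0$, while the right-hand side tends to $\varepsilon\,\mu(t)$. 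Choosing $\varepsilon<1$ (say $\varepsilon=1/2$) at the outset, we get for $s$ sufficiently close to $t$ from below that $\mu(t)\le \tfrac12\mu(t)$ up to a small error, i.e. a strict contradiction with $\mu(t)>0$.

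The main obstacle — and the only place requiring care — is the interplay of the one-sided limits: one must check that left-dense points $s$ actually lie in the neighborhood $U$ from the definition (they do, by left-density), that $\sigma(s)\to t$ as $s\uparrow t$ (the squeeze $s\le\sigma(s)\le t$), and that the forced value of $\sigma^\Delta(t)$ from the right-scattered formula \eqref{derdiscr} is genuinely incompatible with the left-dense estimates. One should also handle the degenerate possibility $\sigma^2(t)=\sigma(t)$ (i.e. $\sigma(t)=\max\mathbb{T}=b$): then $c=0$ and the left-hand side limit is $|\,t-\sigma(t)\,|=\mu(t)>0$ just the same, so the argument is unaffected. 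The hypothesis $t\neq\min\mathbb{T}$ is exactly what guarantees $\rho(t)=t$ is meaningful as left-density rather than the boundary convention, ensuring genuine test points $s<t$ exist.
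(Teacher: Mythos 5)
Your argument is correct, but it takes a more hands-on route than the paper. The paper's proof is two lines: it computes $\lim_{s\to t^{-}}\sigma(s)=t$ (using left-density and the squeeze $s\le\sigma(s)\le t$), concludes that $\sigma$ is discontinuous at $t$ because $\sigma(t)>t$, and then invokes the standard fact that delta differentiability implies continuity (Theorem~1.16\,(i) of \cite{livro}). You instead work directly from the $\varepsilon$--$\delta$ definition of the delta derivative: you pin down the only possible value $c=(\sigma^2(t)-\sigma(t))/\mu(t)$ of $\sigma^\Delta(t)$ at the right-scattered point $t$, then feed in left-dense test points $s\uparrow t$ and show the defining inequality forces $\mu(t)\le\varepsilon\,\mu(t)$, a contradiction. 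The core observation is the same in both proofs --- namely that $\sigma(s)\to t\neq\sigma(t)$ as $s\uparrow t$ --- but your version is self-contained (it does not cite the differentiability-implies-continuity theorem), at the cost of the extra bookkeeping with the candidate value $c$; the paper's version is shorter but leans on the cited theorem. One small point to tidy up: you justify the forced value $c$ by applying \eqref{derdiscr} to ``the (continuous) function $\sigma$,'' but $\sigma$ is precisely \emph{not} continuous at $t$ here --- that is the whole content of the lemma. The step is still salvageable in two ways: either note that under your contradiction hypothesis $\sigma$ \emph{would} be continuous at $t$ (by Theorem~1.16\,(i)), so \eqref{derdiscr} applies legitimately inside the reductio; or, more cleanly, obtain $c$ by substituting $s=t$ into the defining inequality and letting $\varepsilon\to 0$, which requires no continuity at all. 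With that justification repaired, the proof is complete.
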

\begin{proof}
We begin to prove that $\lim_{s\rightarrow t^-}\sigma(s)=t$. Let
$\varepsilon>0$ and take $\delta=\varepsilon$. Then for all
$s\in(t-\delta,t)$ we have
$|\sigma(s)-t|\leq|s-t|<\delta=\varepsilon$. Since $\sigma(t)>t$,
this implies that $\sigma$ is not continuous at $t$, hence not
delta-differentiable by Theorem~1.16 (i) of \cite{livro}.
\hfill $\Box$
\end{proof}

A function $f:\mathbb{T}\rightarrow\mathbb{R}$ is called
\emph{rd-continuous} if it is continuous in right-dense points and
if its left-sided limit exists in left-dense points. We denote the
set of all rd-continuous functions by C$_{\textrm{rd}}$ and the
set of all differentiable functions with rd-continuous derivative
by C$_{\textrm{rd}}^1$.

It is known that rd-continuous functions possess an
\emph{antiderivative}, \textrm{i.e.} there exists a function $F$
with $F^\Delta=f$, and in this case an \emph{integral} is defined
by $\int_{a}^{b}f(t)\Delta t=F(b)-F(a)$. It satisfies

\begin{equation}
\int_t^{\sigma(t)}f(\tau)\Delta\tau=\mu(t)f(t).\label{sigma}
\end{equation}

We now present the integration by parts formulas for the delta
integral:

\begin{lemma} (Theorem~1.77 (v) and (vi) of \cite{livro})
\label{integracao partes}
If $a,b\in\mathbb{T}$ and $f,g\in$C$_{\textrm{rd}}^1$, then
\begin{enumerate}

\item$\int_{a}^{b}f(\sigma(t))g^{\Delta}(t)\Delta t
=\left[(fg)(t)\right]_{t=a}^{t=b}-\int_{a}^{b}f^{\Delta}(t)g(t)\Delta
t$;

\item $\int_{a}^{b}f(t)g^{\Delta}(t)\Delta t
=\left[(fg)(t)\right]_{t=a}^{t=b}-\int_{a}^{b}
f^{\Delta}(t)g(\sigma(t))\Delta t$.

\end{enumerate}
\end{lemma}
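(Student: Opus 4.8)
The plan is to obtain both identities directly from the product rules for the delta derivative recorded in~\eqref{transfor}, together with the defining property of the delta integral, namely that $\int_a^b f(t)\,\Delta t = F(b) - F(a)$ whenever $F^\Delta = f$.

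First I would check the regularity that makes this applicable. Since $f,g \in$ C$_{\textrm{rd}}^1$, the functions $f$, $g$, $f^\Delta$, $g^\Delta$ are rd-continuous, and since $\sigma$ is itself rd-continuous, so are $f^\sigma$ and $g^\sigma$. Hence the two expressions $f^\Delta g + f^\sigma g^\Delta$ and $f^\Delta g^\sigma + f g^\Delta$ appearing on the right-hand sides of~\eqref{transfor} are rd-continuous. By~\eqref{transfor} each of them equals $(fg)^\Delta$, so $fg \in$ C$_{\textrm{rd}}^1$ and $fg$ is an antiderivative of $(fg)^\Delta$; in particular $\int_a^b (fg)^\Delta(t)\,\Delta t = \left[(fg)(t)\right]_{t=a}^{t=b}$.

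For the first identity of the lemma I would rewrite the product rule $(fg)^\Delta(t) = f^\Delta(t)\,g(t) + f^\sigma(t)\,g^\Delta(t)$ from~\eqref{transfor} as $f^\sigma(t)\,g^\Delta(t) = (fg)^\Delta(t) - f^\Delta(t)\,g(t)$, integrate over $[a,b]$, and apply linearity of the delta integral together with the formula for $\int_a^b (fg)^\Delta(t)\,\Delta t$ just established. The second identity of the lemma follows in exactly the same way, starting instead from the other product rule $(fg)^\Delta(t) = f^\Delta(t)\,g^\sigma(t) + f(t)\,g^\Delta(t)$ in~\eqref{transfor}, rewritten as $f(t)\,g^\Delta(t) = (fg)^\Delta(t) - f^\Delta(t)\,g^\sigma(t)$.

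I do not anticipate any genuine difficulty here; the only point worth a moment's care is the verification that $fg \in$ C$_{\textrm{rd}}^1$, since that is precisely what licenses writing $\int_a^b (fg)^\Delta(t)\,\Delta t = \left[(fg)(t)\right]_{t=a}^{t=b}$. The remainder is a single algebraic rearrangement of~\eqref{transfor} followed by an integration, so it is entirely routine.
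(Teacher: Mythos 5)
Your argument is correct: both identities follow by rearranging the two product rules in~\eqref{transfor} and integrating, once you have checked that $(fg)^\Delta$ is rd-continuous so that $fg$ serves as its own antiderivative and $\int_a^b (fg)^\Delta(t)\,\Delta t = \left[(fg)(t)\right]_{t=a}^{t=b}$. The paper itself gives no proof of this lemma --- it is quoted directly from Theorem~1.77 of Bohner and Peterson \cite{livro} --- and your derivation is precisely the standard one found there, so there is nothing to flag.
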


The main result of the calculus of variations on time scales for
problem \eqref{eq:EL:B} is given by the following necessary
optimality condition.

\begin{theorem} (\cite{CD:Bohner:2004})
\label{Th:B:EL-CV} If $y_\ast \in C_{\textrm{rd}}^1$ is a weak
local minimum of the problem
$$\mathcal{L}[y(\cdot)]=\int_{a}^{b}L(t,y^\sigma(t),y^\Delta(t))\Delta t
\longrightarrow \min ,\mbox{ $y(a)=y_a$, $y(b)=y_b$},$$ then the
Euler-Lagrange equation
\begin{equation*}
L_{y^\Delta}^\Delta(t,y^\sigma_\ast(t),y_\ast^\Delta(t))
=L_{y^\sigma}(t,y^\sigma_\ast(t),y_\ast^\Delta(t)),
\end{equation*}
$t\in\mathbb{T}^{k^2}$, holds.
\end{theorem}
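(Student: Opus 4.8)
The plan is to run the classical first-variation argument, with ordinary integration by parts replaced by Lemma~\ref{integracao partes}. First I would fix an arbitrary $\eta \in C_{\mathrm{rd}}^1$ with $\eta(a)=\eta(b)=0$ and put $\phi(\varepsilon) = \mathcal{L}[y_\ast + \varepsilon\eta]$. For $|\varepsilon|$ small, $y_\ast + \varepsilon\eta$ lies in a $C_{\mathrm{rd}}^1$-neighbourhood of $y_\ast$, so $\phi$ attains a local minimum at $\varepsilon = 0$ and hence $\phi'(0)=0$. Differentiating under the $\Delta$-integral sign — legitimate by the smoothness hypotheses on $L$ together with compactness of $[a,b]\cap\mathbb{T}_0$ and rd-continuity of the integrand (note that $y_\ast^\sigma$ and $y_\ast^\Delta$ are rd-continuous, so $t\mapsto L_{y^\sigma}(t,y_\ast^\sigma(t),y_\ast^\Delta(t))$ and $t\mapsto L_{y^\Delta}(t,y_\ast^\sigma(t),y_\ast^\Delta(t))$ are rd-continuous) — yields
\[
\int_a^b\Bigl[L_{y^\sigma}\bigl(t,y_\ast^\sigma(t),y_\ast^\Delta(t)\bigr)\,\eta^\sigma(t)+L_{y^\Delta}\bigl(t,y_\ast^\sigma(t),y_\ast^\Delta(t)\bigr)\,\eta^\Delta(t)\Bigr]\Delta t=0 .
\]

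Next I would remove the $\eta^\sigma$ term. Let $A$ be a $\Delta$-antiderivative on $\mathbb{T}^k$ of $t\mapsto L_{y^\sigma}(t,y_\ast^\sigma(t),y_\ast^\Delta(t))$; it exists since that map is rd-continuous. Applying part~(1) of Lemma~\ref{integracao partes} with $f=A$, $g=\eta$, and using $\eta(a)=\eta(b)=0$, one gets $\int_a^b L_{y^\sigma}\,\eta^\sigma\,\Delta t = -\int_a^b A(t)\,\eta^\Delta(t)\,\Delta t$, whence
\[
\int_a^b\Bigl[L_{y^\Delta}\bigl(t,y_\ast^\sigma(t),y_\ast^\Delta(t)\bigr)-A(t)\Bigr]\eta^\Delta(t)\,\Delta t=0
\]
for every admissible $\eta$.

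The last step is a fundamental (du~Bois-Reymond type) lemma of the calculus of variations on time scales: if $h\in C_{\mathrm{rd}}$ and $\int_a^b h(t)\,\eta^\Delta(t)\,\Delta t=0$ for all $\eta\in C_{\mathrm{rd}}^1$ with $\eta(a)=\eta(b)=0$, then $h$ is constant on $\mathbb{T}^k$. I would prove this by taking $c=\frac{1}{b-a}\int_a^b h(\tau)\,\Delta\tau$ and $\eta(t)=\int_a^t\bigl(h(\tau)-c\bigr)\Delta\tau$; then $\eta\in C_{\mathrm{rd}}^1$ with $\eta(a)=\eta(b)=0$ and $\eta^\Delta=h-c$, so, since $c\int_a^b\eta^\Delta\Delta t=c\,(\eta(b)-\eta(a))=0$,
\[
0=\int_a^b h(t)\,\eta^\Delta(t)\,\Delta t=\int_a^b\bigl(h(t)-c\bigr)^2\,\Delta t ,
\]
which forces $h\equiv c$. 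Applying this with $h=L_{y^\Delta}(\cdot,y_\ast^\sigma,y_\ast^\Delta)-A$ gives $L_{y^\Delta}(t,y_\ast^\sigma(t),y_\ast^\Delta(t))=A(t)+c$ on $\mathbb{T}^k$; since $A$ is delta differentiable on $\mathbb{T}^{k^2}$ with $A^\Delta=L_{y^\sigma}(\cdot,y_\ast^\sigma,y_\ast^\Delta)$, so is $L_{y^\Delta}(\cdot,y_\ast^\sigma,y_\ast^\Delta)$, and $\Delta$-differentiating the identity yields $L_{y^\Delta}^\Delta=L_{y^\sigma}$ on $\mathbb{T}^{k^2}$, as claimed.

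I expect the fundamental lemma to be the main obstacle: one must check that the candidate $\eta$ really belongs to $C_{\mathrm{rd}}^1$ (its $\Delta$-derivative is $h-c$, rd-continuous, and $\eta$ is an indefinite $\Delta$-integral of an rd-continuous function, hence $\Delta$-differentiable) and, crucially, that $\int_a^b g\,\Delta t=0$ for a nonnegative $g\in C_{\mathrm{rd}}$ forces $g\equiv0$ — this needs a separate argument splitting right-scattered points (where \eqref{sigma} gives $\mu(t)g(t)=0$) from right-dense ones (continuity plus a contradiction argument). A secondary technical point is the rigorous justification of differentiation under the $\Delta$-integral sign when computing $\phi'(0)$.
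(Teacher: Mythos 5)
Your argument is correct in substance, and it is essentially the du~Bois-Reymond route of Bohner's original paper \cite{CD:Bohner:2004} — which is apposite, since the statement is quoted there and the present paper offers no proof of it, only of its higher-order analogue (Theorem~\ref{thm0}). It is worth noting that your route is genuinely different from the one the paper uses for that analogue. You antidifferentiate the coefficient of $\eta^\sigma$ and push everything onto $\eta^\Delta$, so that the only fundamental lemma needed is ``$\int_a^b h\,\eta^\Delta\,\Delta t=0$ for all admissible $\eta$ implies $h$ constant on $\mathbb{T}^k$''; the delta differentiability of $t\mapsto L_{y^\Delta}(t,y_\ast^\sigma(t),y_\ast^\Delta(t))$ then comes out as a \emph{conclusion}. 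The paper instead integrates by parts in the opposite direction, delta differentiating the coefficients $L_{u_i}$ under the integral and finishing with a lemma of the form ``$\int f\,\eta^{\sigma^3}\,\Delta t=0$ for all admissible $\eta$ implies $f\equiv 0$'' (Lemma~\ref{lem3}); that direction forces extra smoothness assumptions on $L$ and, in the higher-order case, the structural hypothesis (H) on $\mathbb{T}$, neither of which your first-order argument needs. Two small repairs: the identity $\int_a^b L_{y^\sigma}\,\eta^\sigma\,\Delta t=-\int_a^b A\,\eta^\Delta\,\Delta t$ follows from part~(2) of Lemma~\ref{integracao partes} (with $f=A$, $g=\eta$), not part~(1), which would produce $\eta$ rather than $\eta^\sigma$ on the left; and in the du~Bois-Reymond lemma the vanishing of $\int_a^b (h-c)^2\,\Delta t$ directly gives $h=c$ only on $[a,b)\cap\mathbb{T}$, which however suffices, since the Euler--Lagrange equation is asserted only on $\mathbb{T}^{k^2}$. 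The remaining technical points you flag (positivity of the delta integral of a nonnegative nonzero rd-continuous function, differentiation under the integral sign) are real but standard, and the paper itself defers the latter to \cite[Theorem~3.2]{CD:Bohner:2004}.
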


\begin{remark}
In Theorem~\ref{Th:B:EL-CV}, and in what follows, the notation conforms to
that used in \cite{CD:Bohner:2004}. Expression $L_{y^\Delta}^\Delta$ denotes the $\Delta$ derivative of a composition.
\end{remark}

We will assume from now on that the time scale $\mathbb{T}$ has
sufficiently many points in order for all the calculations to make
sense (with respect to this, we remark that
Theorem~\ref{Th:B:EL-CV} makes only sense if we are assuming a
time scale $\mathbb{T}$ with at least three points). Further, we
consider time scales such that:

\begin{description}
\item[(H)] $\sigma(t)=a_1t+a_0$ for some $a_1\in\mathbb{R}^+$ and
$a_0\in\mathbb{R}$, $t \in [a,b)$.
\end{description}

Under hypothesis (H) we have, among others, the differential
calculus ($\mathbb{T}_0=\mathbb{R}$, $a_1 = 1$, $a_0 = 0$), the difference calculus
($\mathbb{T}_0=\mathbb{Z}$, $a_1 = a_0 = 1$) and the quantum calculus
($\mathbb{T}_0=\{q^k:k\in\mathbb{N}_0\}$, with $q>1$,  $a_1 = q$, $a_0 = 0$).

\begin{remark}
\label{remark0} From assumption (H) it follows by Lemma~\ref{lem0}
that it is not possible to have points which are simultaneously left-dense and
right-scattered. Also points that are
simultaneously left-scattered and right-dense do not occur, since $\sigma$ is strictly increasing.
\end{remark}

\begin{lemma}
Under hypothesis (H), if $f$ is a two times delta differentiable
function, then the next formula holds:
\begin{equation}
\label{rui-1} f^{\sigma\Delta}(t)=a_1 f^{\Delta\sigma}(t),\
t\in\mathbb{T}^{k^2}.
\end{equation}
\end{lemma}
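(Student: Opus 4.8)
The plan is to compute $f^{\sigma\Delta}$ and $f^{\Delta\sigma}$ directly from the definitions and the identities already available, and to show they differ exactly by the factor $a_1$. First I would expand $f^{\sigma}$ using the first formula in \eqref{transfor}, namely $f^{\sigma}(t)=f(t)+\mu(t)f^{\Delta}(t)$. Under hypothesis (H) the graininess is $\mu(t)=\sigma(t)-t=(a_1-1)t+a_0$, which is an affine function of $t$ with constant delta derivative $\mu^{\Delta}(t)=a_1-1$. Hence $f^{\sigma}$ is a sum of products of delta differentiable functions, and I can differentiate it using the product rule in \eqref{transfor}: $f^{\sigma\Delta}=f^{\Delta}+\mu^{\Delta}f^{\Delta\sigma}+\mu f^{\Delta^2}=f^{\Delta}+(a_1-1)f^{\Delta\sigma}+\mu f^{\Delta^2}$, where I use the second product-rule variant so that the surviving $f^{\Delta}$ term appears as $f^{\Delta\sigma}$.

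Next I would handle the right-hand side. Writing $g=f^{\Delta}$, the same identity \eqref{transfor} gives $g^{\sigma}(t)=g(t)+\mu(t)g^{\Delta}(t)$, i.e. $f^{\Delta\sigma}(t)=f^{\Delta}(t)+\mu(t)f^{\Delta^2}(t)$. Multiplying by $a_1$ yields $a_1 f^{\Delta\sigma}=a_1 f^{\Delta}+a_1\mu f^{\Delta^2}$. Comparing this with the expression for $f^{\sigma\Delta}$ obtained above, the claim \eqref{rui-1} reduces to the identity $f^{\Delta}+(a_1-1)f^{\Delta\sigma}+\mu f^{\Delta^2}=a_1 f^{\Delta}+a_1\mu f^{\Delta^2}$; substituting $f^{\Delta\sigma}=f^{\Delta}+\mu f^{\Delta^2}$ once more on the left reduces both sides to $a_1 f^{\Delta}+a_1\mu f^{\Delta^2}$, completing the algebra. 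Throughout, $t$ is restricted to $\mathbb{T}^{k^2}$ so that $f^{\Delta^2}(t)$ is defined and all the product-rule applications are legitimate.

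The main point to be careful about is not the algebra but the justification that the functions being differentiated are themselves delta differentiable where needed: one uses that $t\mapsto t$ and the constant function are delta differentiable with derivatives $1$ and $0$, that $\mu$ inherits delta differentiability from $\sigma$ (affine by (H), so $\mu^{\Delta}=a_1-1$), and that $f^{\Delta}$ is delta differentiable on $\mathbb{T}^{k^2}$ by the hypothesis that $f$ is two times delta differentiable. A secondary subtlety is the choice of which branch of the product rule in \eqref{transfor} to invoke at each step, so that the $\sigma$'s land on the factors in a way that makes the two sides line up without leftover $\mu^{\Delta}$-type corrections; picking the second variant for the product $\mu f^{\Delta}$ is what makes the cross term come out as $(a_1-1)f^{\Delta\sigma}$ rather than something involving $f^{\Delta}$ directly. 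Once these bookkeeping choices are fixed, the identity follows by a short direct computation.
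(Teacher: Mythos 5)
Your proposal is correct and follows essentially the same route as the paper: expand $f^{\sigma}=f+\mu f^{\Delta}$, apply the product rule to get $f^{\sigma\Delta}=f^{\Delta}+\mu^{\Delta}f^{\Delta\sigma}+\mu f^{\Delta^2}$, and use $f^{\Delta\sigma}=f^{\Delta}+\mu f^{\Delta^2}$ together with $\mu^{\Delta}=\sigma^{\Delta}-1=a_1-1$ to collapse everything to $a_1 f^{\Delta\sigma}$. The only cosmetic difference is that you substitute $\mu^{\Delta}=a_1-1$ at the start and verify the identity by expanding both sides, whereas the paper keeps $\mu^{\Delta}$ symbolic and factors out $(1+\mu^{\Delta})f^{\Delta\sigma}$ before identifying the constant.
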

\begin{proof}
We have
$f^{\sigma\Delta}(t)=\left[f(t)+\mu(t)f^\Delta(t)\right]^\Delta$
by formula (\ref{transfor}). By the hypothesis on $\sigma$, $\mu$
is delta differentiable, hence
$\left[f(t)+\mu(t)f^\Delta(t)\right]^\Delta
=f^\Delta(t)+\mu^\Delta(t)f^{\Delta\sigma}(t)+\mu(t)f^{\Delta^2}(t)$
and applying again formula (\ref{transfor}) we obtain
$f^\Delta(t)+\mu^\Delta(t)f^{\Delta\sigma}(t)+\mu(t)f^{\Delta^2}(t)
=f^{\Delta\sigma}(t)+\mu^\Delta(t)f^{\Delta\sigma}(t)
=(1+\mu^\Delta(t))f^{\Delta\sigma}(t)$. Now we only need to
observe that $\mu^\Delta(t)=\sigma^\Delta(t)-1$ and the result
follows.
\hfill $\Box$
\end{proof}

%%%%%%%%%%%%%%%%%%%%%%%%%%%%%%%%%%%%%%%%%%%%%%

\section{Main Results}
\label{secprinci}

Assume that the Lagrangian $L(t,u_0,u_1,\ldots,u_r)$ of problem
(\ref{problema }) has (standard) partial derivatives with respect
to $u_0,\ldots,u_r$, $r\geq 1$, and partial delta derivative with
respect to $t$ of order $r+1$. Let $y\in\mathrm{C}^{2r}$, where
$$\mathrm{C}^{2r}=\left\{y:\mathbb{T}\rightarrow\mathbb{R}:
y^{\Delta^{2r}}\ \mbox{is continuous on}\
\mathbb{T}^{k^{2r}}\right\}.$$

We say that $y_\ast\in\mathrm{C}^{2r}$ is a \emph{weak local
minimum} for (\ref{problema }) provided there exists $\delta>0$
such that $\mathcal{L}(y_\ast)\leq\mathcal{L}(y)$ for all
$y\in\mathrm{C}^{2r}$ satisfying the constraints in (\ref{problema
}) and $\|y-y_\ast\|_{r,\infty} < \delta$, where
$$||y||_{r,\infty} := \sum_{i=0}^{r} ||y^{(i)}||_{\infty},$$
with $y^{(i)} = y^{\sigma^i\Delta^{r-i}}$ and $||y||_{\infty}:=
\sup_{t \in\mathbb{T}^{k^r}} |y(t)|$.

\begin{definition}
We say that $\eta\in C^{2r}$ is an admissible variation for
problem (\ref{problema }) if
\begin{align}
\eta(a)=0,& \quad \eta\left(\rho^{r-1}(b)\right)=0 \nonumber\\
&\vdots\nonumber \\
\eta^{\Delta^{r-1}}(a)=0,&\quad
\eta^{\Delta^{r-1}}\left(\rho^{r-1}(b)\right)=0.\nonumber
\end{align}
\end{definition}

For simplicity of presentation, from now on we fix $r=3$.

\begin{lemma}
\label{lem3} Suppose that $f$ is defined on $[a,\rho^6(b)]$ and is
continuous. Then, under hypothesis (H),
$\int_a^{\rho^5(b)}f(t)\eta^{\sigma^3}(t)\Delta t=0$ for every
admissible variation $\eta$ if and only if $f(t)=0$ for all
$t\in[a,\rho^6(b)]$.
\end{lemma}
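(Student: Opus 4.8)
The plan is to prove the "if" direction trivially (if $f \equiv 0$ on $[a,\rho^6(b)]$ then the integrand vanishes identically on $[a,\rho^5(b)]$, since $\eta^{\sigma^3}(t)$ evaluates $\eta$ at points of the form $\sigma^3(t) \in [a,\rho^3(b)] \subseteq [a,\rho^6(b)]$... actually one needs $\sigma^3(t)$ to lie in the domain of $f$, which it does since $t \le \rho^5(b)$ forces $\sigma^3(t) \le \rho^2(b) \le \rho^6(b)$ when $b$ has the maximum structure — I would double-check the index bookkeeping here, but morally it is immediate). The substance is the "only if" direction, which is a higher-order, time-scale analogue of the fundamental lemma of the calculus of variations, complicated by the fact that the test functions are not arbitrary but must satisfy the $2r = 6$ boundary constraints in the definition of admissible variation, and by the shift $\sigma^3$ in the integrand.

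For the "only if" direction I would argue by contradiction: suppose $f(t_0) \neq 0$ for some $t_0 \in [a,\rho^6(b)]$, say $f(t_0) > 0$. The key step is to construct an admissible variation $\eta$ for which $\int_a^{\rho^5(b)} f(t)\eta^{\sigma^3}(t)\,\Delta t > 0$. I would split into two cases according to whether the relevant point is right-dense or right-scattered, using Remark~\ref{remark0} to control the local structure of $\mathbb{T}$ under hypothesis (H). In the right-scattered case, formula~\eqref{derdiscr} and~\eqref{sigma} turn the integral near $t_0$ into a finite sum, and one can choose $\eta$ supported on a single point (or a small cluster of scattered points) so that only the term $\mu(t_0)f(t_0)\eta^{\sigma^3}(t_0)$ survives; the boundary constraints are arranged to be satisfied by taking the support far from $a$ and $\rho^{r-1}(b)$ and by building $\eta$ so that $\eta, \eta^\Delta, \ldots, \eta^{\Delta^{r-1}}$ all vanish at the endpoints (this is where one needs enough points). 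In the right-dense case, I would use continuity of $f$ to find a genuine subinterval on which $f$ keeps its sign, and take $\eta$ to be a smooth "bump" of class $\mathrm{C}^{2r}$ supported there, with $\eta^{\sigma^3}$ nonnegative and positive on a set of positive measure — the analogue of $(t-t_1)^{2r}(t_2-t)^{2r}$ restricted to the time scale, whose $\Delta$-derivatives up to order $r-1$ vanish at the two chosen endpoints.

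The main obstacle I anticipate is the interplay between the composition $\eta^{\sigma^3}$ and the construction of $\eta$: one cannot just prescribe $\eta^{\sigma^3}$ freely, one must prescribe $\eta$ and then understand what $\eta^{\sigma^3}$ does, especially near right-scattered points where $\sigma^3$ collapses several points together, and near the boundary where one must still honour all six constraints. Making the bump genuinely $\mathrm{C}^{2r}$ (not merely continuous) while controlling its iterated $\sigma$-shifts is the delicate part; hypothesis (H), which forces $\sigma$ to be affine and hence $\mu$ delta-differentiable of all orders, is exactly what makes this feasible, and I expect the proof to lean on it repeatedly. The remaining steps — verifying the contradiction by a lower bound on the integral, and handling the sign $f(t_0) < 0$ by replacing $\eta$ with $-\eta$ — are routine.
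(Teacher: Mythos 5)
Your proposal is essentially the paper's own argument: both prove the nontrivial direction by contradiction, split on whether $t_0$ is right-dense or right-scattered using Remark~\ref{remark0} and hypothesis (H), take a polynomial bump supported on an interval where $f$ keeps its sign in the dense case, and in the scattered case take $\eta$ supported at a single isolated point so that formula~\eqref{sigma} collapses the integral to $\mu(t_0)f(t_0)\eta^{\sigma^3}(t_0)>0$. One small correction: the exponent $2r$ in your bump $(t-t_1)^{2r}(t_2-t)^{2r}$ only yields a $C^{2r-1}$ function after gluing with zero (the $2r$-th derivative jumps at the support endpoints), which is not enough for the requirement $\eta\in C^{2r}$; the paper uses exponent $8=2r+2$ (with $r=3$) precisely to retain $C^6$ regularity.
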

\begin{proof}
If $f(t)=0$, then the result is obvious.

Now, suppose without loss of generality that there exists
$t_0\in[a,\rho^6(b)]$ such that $f(t_0)>0$. First we consider the
case in which $t_0$ is right-dense, hence left-dense or $t_0=a$
(see Remark~\ref{remark0}). If $t_0=a$, then by the continuity of
$f$ at $t_0$ there exists a $\delta>0$ such that for all
$t\in[t_0,t_0+\delta)$ we have $f(t)>0$. Let us define $\eta$ by
\[ \eta(t) = \left\{ \begin{array}{ll}
(t-t_0)^8(t-t_0-\delta)^8 & \mbox{if $t \in [t_0,t_0+\delta)$};\\
0 & \mbox{otherwise}.\end{array} \right. \] Clearly $\eta$ is a
$C^6$ function and satisfy the requirements of an admissible
variation. But with this definition for $\eta$ we get the
contradiction
$$\int_a^{\rho^5(b)}f(t)\eta^{\sigma^3}(t)\Delta t
=\int_{t_0}^{t_0+\delta}f(t)\eta^{\sigma^3}(t)\Delta t>0.$$
Now, consider the case where $t_0\neq a$. Again, the continuity of
$f$ ensures the existence of a $\delta>0$ such that for all
$t\in(t_0-\delta,t_0+\delta)$ we have $f(t)>0$. Defining $\eta$ by
\[ \eta(t) = \left\{ \begin{array}{ll}
(t-t_0+\delta)^8(t-t_0-\delta)^8 & \mbox{if $t \in (t_0-\delta,t_0+\delta)$};\\
0 & \mbox{otherwise},\end{array} \right. \] and noting that it
satisfy the properties of an admissible variation, we obtain
$$\int_a^{\rho^5(b)}f(t)\eta^{\sigma^3}(t)\Delta t
=\int_{t_0-\delta}^{t_0+\delta}f(t)\eta^{\sigma^3}(t)\Delta t>0,$$
which is again a contradiction.

Assume now that $t_0$ is right-scattered. In view of
Remark~\ref{remark0}, all the points $t$ such that $t\geq t_0$
must be isolated. So, define $\eta$ such that
$\eta^{\sigma^3}(t_0)=1$ and is zero elsewhere. It is easy to see
that $\eta$ satisfies all the requirements of an admissible
variation. Further, using formula (\ref{sigma})
$$\int_a^{\rho^5(b)}f(t)\eta^{\sigma^3}(t)\Delta t
=\int_{t_0}^{\sigma(t_0)}f(t)\eta^{\sigma^3}(t)\Delta t
=\mu(t_0)f(t_0)\eta^{\sigma^3}(t_0)>0,$$
which is a contradiction.
\hfill $\Box$
\end{proof}

\begin{theorem} \label{thm0}
Let the Lagrangian $L(t,u_0,u_1,u_2,u_3)$ satisfy the conditions in the beginning of the section.
On a time scale $\mathbb{T}$ satisfying (H), if $y_\ast$ is a
weak local minimum for the problem of minimizing
$$
\int_{a}^{\rho^{2}(b)}
L\left(t,y^{\sigma^3}(t),y^{\sigma^{2}\Delta}(t),
y^{\sigma\Delta^2}(t),y^{\Delta^{3}}(t)\right)\Delta t
$$ subject to
\begin{align}
y(a)=y_a,& \ y\left(\rho^{2}(b)\right)=y_b,\nonumber\\
y^{\Delta}(a)=y_a^{1},&\
y^{\Delta}\left(\rho^{2}(b)\right)=y_b^{1},\nonumber\\
y^{\Delta^{2}}(a)=y_a^{2},&\
y^{\Delta^{2}}\left(\rho^{2}(b)\right)=y_b^{2},\nonumber
\end{align}
then $y_\ast$ satisfies the Euler-Lagrange equation
$$L_{u_0}(\cdot)-L^\Delta_{u_1}(\cdot)+\frac{1}{a_1}L^{\Delta^2}_{u_2}(\cdot)
-\frac{1}{a_1^3}L^{\Delta^3}_{u_3}(\cdot)=0,\quad t\in[a,\rho^6(b)],$$
where
$(\cdot)=(t,y_\ast^{\sigma^3}(t),y_\ast^{\sigma^2\Delta}(t),
y_\ast^{\sigma\Delta^2}(t),y_\ast^{\Delta^3}(t))$.
\end{theorem}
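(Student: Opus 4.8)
The strategy is the classical variational argument adapted to time scales: embed $y_\ast$ in a one-parameter family $y_\ast + \varepsilon\eta$ for an arbitrary admissible variation $\eta \in C^6$, define $\phi(\varepsilon) = \mathcal{L}[y_\ast + \varepsilon\eta]$, and use the fact that $\varepsilon = 0$ is an interior minimum of $\phi$ to conclude $\phi'(0) = 0$. Differentiating under the delta integral (justified by the smoothness assumptions on $L$) yields
$$
\phi'(0) = \int_a^{\rho^2(b)} \Bigl[ L_{u_0}(\cdot)\,\eta^{\sigma^3} + L_{u_1}(\cdot)\,\eta^{\sigma^2\Delta} + L_{u_2}(\cdot)\,\eta^{\sigma\Delta^2} + L_{u_3}(\cdot)\,\eta^{\Delta^3} \Bigr]\Delta t = 0.
$$
The bulk of the work is to integrate by parts in the last three terms so as to factor out $\eta^{\sigma^3}$ and then invoke the fundamental lemma (Lemma~\ref{lem3}).

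The first step is to handle the $L_{u_1}$ term: apply Lemma~\ref{integracao partes}(1) with $f = L_{u_1}$ and $g = \eta^{\sigma^2\Delta^0}$... more precisely, write $\eta^{\sigma^2\Delta}(t) = \bigl(\eta^{\sigma^2}\bigr)^\Delta(t)$ and integrate by parts once to get $-\int L_{u_1}^\Delta(\cdot)\,\eta^{\sigma^2}\,\Delta t$ plus boundary terms. For the $L_{u_2}$ term I would integrate by parts twice, and for the $L_{u_3}$ term three times, each time peeling off one $\Delta$ and converting it to a $\Delta$ on the coefficient, at the cost of boundary evaluations. The crucial technical point is that each integration by parts introduces a $\sigma$ on the surviving factor of $\eta$, so after the reductions one is left with expressions in $\eta^{\sigma^2}$, $\eta^{\sigma^3}$, $\eta^{\sigma^2\Delta\sigma}$, etc., and these must all be rewritten as multiples of $\eta^{\sigma^3}$. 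This is exactly where hypothesis (H) enters: by formula (\ref{rui-1}), $f^{\sigma\Delta} = a_1 f^{\Delta\sigma}$, so commuting a $\sigma$ past a $\Delta$ costs a factor $a_1$ (or $1/a_1$ the other way). Tracking these factors through the double and triple integrations by parts produces precisely the coefficients $-1$, $1/a_1$, $-1/a_1^3$ on the $L_{u_1}^\Delta$, $L_{u_2}^{\Delta^2}$, $L_{u_3}^{\Delta^3}$ terms.

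The second step is to show that all the boundary terms vanish. Each integration by parts contributes terms of the form $\bigl[(L_{u_j}^{\Delta^k} \cdot \eta^{\cdots\Delta^m\cdots})(t)\bigr]_{t=a}^{t=\rho^2(b)}$ with $m \le r-1 = 2$, and the admissible-variation conditions $\eta^{\Delta^i}(a) = \eta^{\Delta^i}(\rho^2(b)) = 0$ for $i = 0,1,2$ kill each such term — though one must be a little careful, since the boundary factors of $\eta$ may carry $\sigma$'s (e.g.\ $\eta^{\sigma^2\Delta}$ evaluated at endpoints), and one needs Remark~\ref{remark0} (no left-dense right-scattered points, $\sigma$ strictly increasing) together with $\sigma(b) = b$ to argue that $\eta$ and its derivatives still vanish there. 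After discarding the boundary terms, the identity reads
$$
\int_a^{\rho^5(b)} \Bigl[ L_{u_0}(\cdot) - L_{u_1}^\Delta(\cdot) + \tfrac{1}{a_1}L_{u_2}^{\Delta^2}(\cdot) - \tfrac{1}{a_1^3}L_{u_3}^{\Delta^3}(\cdot) \Bigr]\eta^{\sigma^3}(t)\,\Delta t = 0
$$
for every admissible variation $\eta$.

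The final step is to apply Lemma~\ref{lem3}: the bracketed expression is continuous on $[a,\rho^6(b)]$ (by the hypotheses on $L$ and $y_\ast \in C^{2r} = C^6$, which guarantee that $L_{u_3}^{\Delta^3}$ is continuous there), so it must vanish identically on $[a,\rho^6(b)]$, which is the Euler-Lagrange equation. I expect the main obstacle to be the bookkeeping in the second step — correctly commuting the iterated $\sigma$'s and $\Delta$'s using (\ref{rui-1}) to obtain the exact powers of $a_1$, and verifying that the integration-by-parts formulas of Lemma~\ref{integracao partes} can legitimately be iterated (i.e.\ that the intermediate functions $L_{u_j}^{\Delta^k}$ lie in $C_{\mathrm{rd}}^1$ on the relevant restricted time scales $\mathbb{T}^{k^m}$). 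The boundary-term analysis is conceptually routine but requires care because of the $\sigma$-shifts landing on the endpoints.
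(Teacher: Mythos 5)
Your overall route coincides with the paper's: compute the first variation, integrate by parts once on the $L_{u_1}$ term, twice on $L_{u_2}$, three times on $L_{u_3}$, use \eqref{rui-1} to commute each stray $\sigma$ past a $\Delta$ at the cost of a factor of $1/a_1$ (which indeed yields the coefficients $-1$, $1/a_1$, $-1/a_1^3$), and finish with Lemma~\ref{lem3}. The final identity over $[a,\rho^5(b)]$ and the conclusion on $[a,\rho^6(b)]$ also match the paper.

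The genuine gap is in your boundary-term step. You cannot integrate by parts directly over $[a,\rho^2(b)]$ and land boundary evaluations at $t=\rho^2(b)$: the differentiated coefficient $L_{u_1}^\Delta(\cdot)$ (and later $L_{u_2}^{\Delta^2}$, $L_{u_3}^{\Delta^3}$) lives on a strictly smaller time scale, so before each integration by parts one must split off the last cell of the integral, $\int_a^{\rho^{k}(b)}=\int_a^{\rho^{k+1}(b)}+\mu(\rho^{k+1}(b))\left\{\cdots\right\}(\rho^{k+1}(b))$, using \eqref{sigma}. Consequently the boundary evaluations sit at $\rho^3(b)$, then $\rho^4(b)$, then $\rho^5(b)$ --- points at which the admissibility conditions $\eta^{\Delta^i}(\rho^2(b))=0$ say nothing directly. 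Your assertion that the admissible-variation conditions ``kill each such term'' therefore fails as stated: one must first propagate the conditions inward via \eqref{derdiscr} (e.g.\ $\eta^\Delta(\rho^2(b))=0$ together with $\eta(\rho^2(b))=0$ forces $\eta(\rho(b))=0$, and similarly $\eta^\Delta(\rho(b))=0$ and $\eta(b)=0$), and even then the surviving term $L_{u_3}(\rho^3(b))\,\eta^{\Delta^2}(\rho^3(b))$ does not vanish on its own, since $\eta^{\Delta^2}(\rho^3(b))$ is in general nonzero. It disappears only because it cancels against the split-off cell term $\mu(\rho^3(b))L_{u_3}(\rho^3(b))\,\eta^{\Delta^3}(\rho^3(b))$, via the identity $\eta^{\Delta^2\sigma}(\rho^3(b))=\eta^{\Delta^2}(\rho^2(b))=0$. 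This cancellation, and its analogues at the subsequent stages, is the technical heart of the paper's proof; carried out literally, your plan would leave nonvanishing boundary contributions.
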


\begin{proof}
Suppose that $y_\ast$ is a weak local minimum of $\mathcal{L}$.
Let $\eta\in C^6$ be an admissible variation, \textrm{i.e.} $\eta$
is an arbitrary function such that $\eta$, $\eta^\Delta$ and
$\eta^{\Delta^2}$ vanish at $t=a$ and $t=\rho^2(b)$. Define
function $\Phi:\mathbb{R}\rightarrow\mathbb{R}$ by
$\Phi(\varepsilon)=\mathcal{L}(y_\ast+\varepsilon\eta)$.
This function has a minimum at $\varepsilon=0$, so we must have
(see \cite[Theorem~3.2]{CD:Bohner:2004})
\begin{equation}
\label{rui0}
\Phi'(0)=0.
\end{equation}
Differentiating $\Phi$ under the integral sign (we can do this
in virtue of the conditions we imposed on $L$) with respect to
$\varepsilon$ and setting $\varepsilon=0$,
we obtain from (\ref{rui0}) that
\begin{multline}
\label{rui1}
0=\int_a^{\rho^2(b)}\Big\{L_{u_0}(\cdot)\eta^{\sigma^3}(t)
+L_{u_1}(\cdot)\eta^{\sigma^2\Delta}(t)\\
+L_{u_2}(\cdot)\eta^{\sigma\Delta^2}(t)
+L_{u_3}(\cdot)\eta^{\Delta^3}(t)\Big\}\Delta t.
\end{multline}
Since we will delta differentiate $L_{u_i}$, $i=1,2,3$, we rewrite
(\ref{rui1}) in the following form:
\begin{multline}
\label{rui2}
0=\int_a^{\rho^3(b)}\Big\{L_{u_0}(\cdot)\eta^{\sigma^3}(t)
+L_{u_1}(\cdot)\eta^{\sigma^2\Delta}(t) \\
+L_{u_2}(\cdot)\eta^{\sigma\Delta^2}(t)
+L_{u_3}(\cdot)\eta^{\Delta^3}(t)\Big\}\Delta t\\
+\mu(\rho^3(b))\left\{L_{u_0}\eta^{\sigma^3}
+L_{u_1}\eta^{\sigma^2\Delta}+L_{u_2}\eta^{\sigma\Delta^2}
+L_{u_3}\eta^{\Delta^3}\right\}(\rho^3(b)).
\end{multline}
Integrating (\ref{rui2}) by parts gives
\begin{equation}
\label{rui3}
\begin{aligned}
0&=\int_a^{\rho^3(b)}\Big\{L_{u_0}(\cdot)\eta^{\sigma^3}(t)
-L^\Delta_{u_1}(\cdot)\eta^{\sigma^3}(t) \\
&\qquad \qquad \qquad \qquad \qquad
-L^\Delta_{u_2}(\cdot)\eta^{\sigma\Delta\sigma}(t)
-L^\Delta_{u_3}(\cdot)\eta^{\Delta^2\sigma}(t)\Big\}\Delta t\\
\quad
&+\left[L_{u_1}(\cdot)\eta^{\sigma^2}(t)\right]_{t=a}^{t=\rho^3(b)}
+\left[L_{u_2}(\cdot)\eta^{\sigma\Delta}(t)\right]_{t=a}^{t=\rho^3(b)}
+\left[L_{u_3}(\cdot)\eta^{\Delta^2}(t)\right]_{t=a}^{t=\rho^3(b)}\\
\quad
&+\mu(\rho^3(b))\left\{L_{u_0}\eta^{\sigma^3}
+L_{u_1}\eta^{\sigma^2\Delta}+L_{u_2}\eta^{\sigma\Delta^2}
+L_{u_3}\eta^{\Delta^3}\right\}(\rho^3(b)).
\end{aligned}
\end{equation}
Now we show how to simplify (\ref{rui3}). We start by
evaluating $\eta^{\sigma^2}(a)$:
\begin{align}
\eta^{\sigma^2}(a)&=\eta^\sigma(a)+\mu(a)\eta^{\sigma\Delta}(a)\nonumber\\
&=\eta(a)+\mu(a)\eta^\Delta(a)+\mu(a)a_1\eta^{\Delta\sigma}(a)\label{rui6}\\
&=\mu(a)a_1\left(\eta^\Delta(a)+\mu(a)\eta^{\Delta^2}(a)\right)\nonumber\\
=0 \nonumber,
\end{align}
where the last term of (\ref{rui6}) follows from (\ref{rui-1}).
Now, we calculate $\eta^{\sigma\Delta}(a)$. By (\ref{rui-1}) we
have $\eta^{\sigma\Delta}(a)=a_1\eta^{\Delta\sigma}(a)$ and
applying (\ref{transfor}) we obtain
$$a_1\eta^{\Delta\sigma}(a)=a_1\left(\eta^\Delta(a)+\mu(a)\eta^{\Delta^2}(a)\right)=0.$$
Now we turn to analyze what happens at $t=\rho^3(b)$. It is easy
to see that if $b$ is left-dense, then the last terms of (\ref{rui3})
vanish. So suppose that $b$ is left-scattered. Since
$\sigma$ is delta differentiable, by Lemma~\ref{lem0} we cannot
have points which are simultaneously left-dense and
right-scattered. Hence, $\rho(b)$, $\rho^{2}(b)$ and $\rho^3(b)$
are right-scattered points. Now, by hypothesis
$\eta^\Delta(\rho^2(b))=0$, hence we have by (\ref{derdiscr}) that
$$\frac{\eta(\rho(b))-\eta(\rho^2(b))}{\mu(\rho^2(b))}=0.$$
But $\eta(\rho^2(b))=0$, hence $\eta(\rho(b))=0$. Analogously, we
have
$$\eta^{\Delta^2}(\rho^2(b))=0\Leftrightarrow\frac{\eta^\Delta(\rho(b))
-\eta^\Delta(\rho^2(b))}{\mu(\rho^2(b))}=0,$$ from what follows
that $\eta^\Delta(\rho(b))=0$. This last equality implies
$\eta(b)=0$. Applying previous expressions to the last terms
of (\ref{rui3}), we obtain:
\begin{equation}
\eta^{\sigma^2}(\rho^3(b))=\eta(\rho(b))=0,\nonumber
\end{equation}
$$\eta^{\sigma\Delta}(\rho^3(b))=\frac{\eta^{\sigma^2}(\rho^3(b))
-\eta^\sigma(\rho^3(b))}{\mu(\rho^3(b))}=0,$$
$$\eta^{\sigma^3}(\rho^3(b))=\eta(b)=0,$$
$$\eta^{\sigma^2\Delta}(\rho^3(b))=\frac{\eta^{\sigma^3}(\rho^3(b))
-\eta^{\sigma^2}(\rho^3(b))}{\mu(\rho^3(b))}=0,$$
\begin{equation*}
\begin{split}
\eta^{\sigma\Delta^2}(\rho^3(b))&=\frac{\eta^{\sigma\Delta}(\rho^2(b))
-\eta^{\sigma\Delta}(\rho^3(b))}{\mu(\rho^3(b))}\\
&=\frac{\frac{\eta^\sigma(\rho(b))
-\eta^\sigma(\rho^2(b))}{\mu(\rho^2(b))}-\frac{\eta^\sigma(\rho^2(b))
-\eta^\sigma(\rho^3(b))}{{\mu(\rho^3(b))}}}{\mu(\rho^3(b))}\\
&=0.
\end{split}
\end{equation*}
In view of our previous calculations,
\begin{multline*}
\left[L_{u_1}(\cdot)\eta^{\sigma^2}(t)\right]_{t=a}^{t=\rho^3(b)}
+\left[L_{u_2}(\cdot)\eta^{\sigma\Delta}(t)\right]_{t=a}^{t=\rho^3(b)}
+\left[L_{u_3}(\cdot)\eta^{\Delta^2}(t)\right]_{t=a}^{t=\rho^3(b)}\\
+\mu(\rho^3(b))\left\{L_{u_0}\eta^{\sigma^3}
+L_{u_1}\eta^{\sigma^2\Delta}+L_{u_2}\eta^{\sigma\Delta^2}
+L_{u_3}\eta^{\Delta^3}\right\}(\rho^3(b))
\end{multline*}
is reduced to\footnote{In what follows there is some abuse of notation: $L_{u_3}(\rho^3(b))$ denotes $\left.L_{u_3}(\cdot)\right|_{t = \rho^3(b)}$, that is,
we substitute $t$ in $(\cdot)=(t,y_\ast^{\sigma^3}(t),y_\ast^{\sigma^2\Delta}(t),
y_\ast^{\sigma\Delta^2}(t),y_\ast^{\Delta^3}(t))$ by $\rho^3(b)$.}
\begin{equation}
\label{rui7}
L_{u_3}(\rho^3(b))\eta^{\Delta^2}(\rho^3(b))
+\mu(\rho^3(b))L_{u_3}(\rho^3(b))\eta^{\Delta^3}(\rho^3(b)).
\end{equation}
Now note that
$$\eta^{\Delta^2\sigma}(\rho^3(b))=\eta^{\Delta^2}(\rho^3(b))
+\mu(\rho^3(b))\eta^{\Delta^3}(\rho^3(b))$$
and by hypothesis
$\eta^{\Delta^2\sigma}(\rho^3(b))=\eta^{\Delta^2}(\rho^2(b))=0$.
Therefore,
$$\mu(\rho^3(b))\eta^{\Delta^3}(\rho^3(b))=-\eta^{\Delta^2}(\rho^3(b)),$$
from which follows that (\ref{rui7}) must be zero.
We have just simplified (\ref{rui3}) to
\begin{multline}
\label{rui8}
0 = \int_a^{\rho^3(b)}\Big\{L_{u_0}(\cdot)\eta^{\sigma^3}(t)
-L^\Delta_{u_1}(\cdot)\eta^{\sigma^3}(t)\\
-L^\Delta_{u_2}(\cdot)\eta^{\sigma
\Delta\sigma}(t)-L^\Delta_{u_3}(\cdot)\eta^{\Delta^2\sigma}(t)\Big\}\Delta t.
\end{multline}
In order to apply again the integration by parts formula, we must
first make some transformations in $\eta^{\sigma\Delta\sigma}$ and
$\eta^{\Delta^2\sigma}$. By (\ref{rui-1}) we have
\begin{equation}
\label{rui12}
\eta^{\sigma\Delta\sigma}(t)=\frac{1}{a_1}\eta^{\sigma^2\Delta}(t)
\end{equation}
and
\begin{equation}
\label{rui121}
\eta^{\Delta^2\sigma}(t)=\frac{1}{a_1^2}\eta^{\sigma\Delta^2}(t).
\end{equation}
Hence, (\ref{rui8}) becomes
\begin{multline}
\label{rui9}
0 = \int_a^{\rho^3(b)}\Big\{L_{u_0}(\cdot)\eta^{\sigma^3}(t)
-L^\Delta_{u_1}(\cdot)\eta^{\sigma^3}(t)\\
-\frac{1}{a_1}L^\Delta_{u_2}(\cdot)\eta^{\sigma^2\Delta}(t)
-\frac{1}{a_1^2}L^\Delta_{u_3}(\cdot)\eta^{\sigma\Delta^2}(t)\Big\}\Delta t.
\end{multline}
By the same reasoning as before, (\ref{rui9}) is equivalent to
\begin{multline*}
0=\int_a^{\rho^4(b)}\Big\{L_{u_0}(\cdot)\eta^{\sigma^3}(t)
-L^\Delta_{u_1}(\cdot)\eta^{\sigma^3}(t)\\
-\frac{1}{a_1}L^\Delta_{u_2}(\cdot)\eta^{\sigma^2\Delta}(t)
-\frac{1}{a_1^2}L^\Delta_{u_3}(\cdot)\eta^{\sigma\Delta^2}(t)\Big\}\Delta t\\
\quad +\mu(\rho^4(b))\left\{L_{u_0}\eta^{\sigma^3}
-L^\Delta_{u_1}\eta^{\sigma^3}-\frac{1}{a_1}L^\Delta_{u_2}\eta^{\sigma^2\Delta}
-\frac{1}{a_1^2}L^\Delta_{u_3}\eta^{\sigma\Delta^2}\right\}(\rho^4(b))
\end{multline*}
and integrating by parts we obtain
\begin{equation}
\label{del:rui13}
\begin{aligned}
0&=\int_a^{\rho^4(b)}\Big\{L_{u_0}(\cdot)\eta^{\sigma^3}(t)
-L^\Delta_{u_1}(\cdot)\eta^{\sigma^3}(t)\\
&\qquad \qquad \qquad \qquad \qquad  +\frac{1}{a_1}L^{\Delta^2}_{u_2}(\cdot)\eta^{\sigma^3}(t)
+\frac{1}{a_1^2}L^{\Delta^2}_{u_3}(\cdot)\eta^{\sigma\Delta\sigma}(t)\Big\} \Delta t\\
&-\left[\frac{1}{a_1}L_{u_2}^\Delta(\cdot)\eta^{\sigma^2}(t)\right]_{t=a}^{t=\rho^4(b)}
-\left[\frac{1}{a_1^2}L_{u_3}^\Delta(\cdot)\eta^{\sigma\Delta}(t)\right]_{t=a}^{t
=\rho^4(b)}\\
\quad &+\mu(\rho^4(b))\left\{L_{u_0}\eta^{\sigma^3}
-L^\Delta_{u_1}\eta^{\sigma^3}-\frac{1}{a_1}L^\Delta_{u_2}\eta^{\sigma^2\Delta}
-\frac{1}{a_1^2}L^\Delta_{u_3}\eta^{\sigma\Delta^2}\right\}(\rho^4(b)).
\end{aligned}
\end{equation}
Using analogous arguments to those above,
we simplify (\ref{del:rui13}) to
\begin{multline*}
\int_a^{\rho^4(b)}\Big\{L_{u_0}(\cdot)\eta^{\sigma^3}(t)
-L^\Delta_{u_1}(\cdot)\eta^{\sigma^3}(t)\\
+\frac{1}{a_1}L^\Delta_{u_2}(\cdot)\eta^{\sigma^2\Delta}(t)
+\frac{1}{a_1^3}L^{\Delta^2}_{u_3}(\cdot)\eta^{\sigma^2\Delta}(t)\Big\}\Delta t = 0.
\end{multline*}
Calculations as done before lead us to the final
expression
\begin{multline*}
\int_a^{\rho^5(b)}\Big\{L_{u_0}(\cdot)\eta^{\sigma^3}(t)
-L^\Delta_{u_1}(\cdot)\eta^{\sigma^3}(t)\\
+\frac{1}{a_1}L^{\Delta^2}_{u_2}(\cdot)\eta^{\sigma^3}(t)
-\frac{1}{a_1^3}L^{\Delta^3}_{u_3}(\cdot)\eta^{\sigma^3}(t)\Big\}\Delta t = 0,
\end{multline*}
which is equivalent to
\begin{equation}
\label{almostDone}
\int_a^{\rho^5(b)}\left\{L_{u_0}(\cdot)-L^\Delta_{u_1}(\cdot)
+\frac{1}{a_1}L^{\Delta^2}_{u_2}(\cdot)
-\frac{1}{a_1^3}L^{\Delta^3}_{u_3}(\cdot)\right\}\eta^{\sigma^3}(t)\Delta t = 0.
\end{equation}
Applying Lemma~\ref{lem3} to (\ref{almostDone}),
we obtain the Euler-Lagrange equation
$$L_{u_0}(\cdot)-L^\Delta_{u_1}(\cdot)+\frac{1}{a_1}L^{\Delta^2}_{u_2}(\cdot)
-\frac{1}{a_1^3}L^{\Delta^3}_{u_3}(\cdot)=0,\quad t\in[a,\rho^6(b)].$$
\hfill $\Box$
\end{proof}

Following exactly the same steps in the proofs of Lemma~\ref{lem3}
and Theorem~\ref{thm0} for an arbitrary $r\in\mathbb{N}$, one
easily obtains the Euler-Lagrange equation for problem
(\ref{problema }).

\begin{theorem}(Necessary optimality condition for problems of the calculus of
variations with higher-order delta derivatives) On a time scale
$\mathbb{T}$ satisfying hypothesis (H), if $y_\ast$ is a weak
local minimum for problem (\ref{problema }), then $y_\ast$ satisfies the Euler-Lagrange equation
\begin{equation}
\label{eqrder}
\sum_{i=0}^{r}(-1)^i\left(\frac{1}{a_1}\right)^{\frac{(i
-1)i}{2}}L^{\Delta^i}_{u_i}\left(t,y_\ast^{\sigma^r}(t),y_\ast^{\sigma^{r-1}\Delta}(t),
\ldots,y_\ast^{\sigma\Delta^{r-1}}(t),y_\ast^{\Delta^r}(t)\right) = 0 ,
\end{equation}
$t\in[a,\rho^{2r}(b)]$.
\end{theorem}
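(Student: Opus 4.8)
The plan is to carry out, for general $r\in\mathbb{N}$, exactly the computation performed for $r=3$ in the proof of Theorem~\ref{thm0}. I would begin by fixing a weak local minimum $y_\ast$ of (\ref{problema }) and an arbitrary admissible variation $\eta\in C^{2r}$, so that $\eta,\eta^\Delta,\ldots,\eta^{\Delta^{r-1}}$ vanish at $t=a$ and at $t=\rho^{r-1}(b)$. Putting $\Phi(\varepsilon)=\mathcal{L}(y_\ast+\varepsilon\eta)$, the function $\Phi$ has a minimum at $\varepsilon=0$, hence $\Phi'(0)=0$; differentiating under the integral sign (legitimate by the smoothness assumed on $L$) and setting $\varepsilon=0$ yields the first-variation identity
\begin{equation}
\label{eq:plan-fv}
0=\int_a^{\rho^{r-1}(b)}\sum_{i=0}^{r}L_{u_i}(\cdot)\,\eta^{\sigma^{r-i}\Delta^i}(t)\,\Delta t,\qquad (\cdot)=\left(t,y_\ast^{\sigma^r}(t),\ldots,y_\ast^{\Delta^r}(t)\right).
\end{equation}

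The core of the proof is an iteration repeated $r$ times. At the start of the $k$-th pass ($k=1,\ldots,r$) every summand with $\eta$-index $i<k$ is already of the form $c_i\,L^{\Delta^i}_{u_i}(\cdot)\,\eta^{\sigma^r}(t)$, while those with $i\ge k$ still carry a delta derivative of $\eta$; the pass then consists of three moves. First I would enlarge the interval of integration from $[a,\rho^{r+k-2}(b)]$ to $[a,\rho^{r+k-1}(b)]$ by means of (\ref{sigma}), which costs a companion term $\mu(\rho^{r+k-1}(b))\{\cdots\}(\rho^{r+k-1}(b))$. Second I would apply the delta integration by parts formula (Lemma~\ref{integracao partes}) to every summand with $i\ge k$, trading one $\Delta$ on $\eta$ for one $\Delta$ on its coefficient and producing a boundary bracket $\left[\,\cdots\,\right]_{t=a}^{t=\rho^{r+k-1}(b)}$. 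Third I would push every $\sigma$ that has landed to the right of a $\Delta$ back to the outside by the identity
$$\eta^{\sigma^{p}\Delta^{q}\sigma}(t)=\frac{1}{a_1^{\,q}}\,\eta^{\sigma^{p+1}\Delta^{q}}(t),$$
obtained by applying (\ref{rui-1}) $q$ times together with the constancy of $a_1$. This third move is the only source of powers of $1/a_1$: in the $k$-th pass the $u_i$ summand ($i\ge k$) acquires the factor $1/a_1^{\,i-k}$, so after all $r$ passes the $u_i$ term has accumulated $(-1)^i(1/a_1)^{0+1+\cdots+(i-1)}=(-1)^i(1/a_1)^{i(i-1)/2}$, exactly the coefficient in (\ref{eqrder}). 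At the end of the $r$-th pass I expect to be left with
\begin{equation}
\label{eq:plan-almostdone}
\int_a^{\rho^{2r-1}(b)}\left\{\sum_{i=0}^{r}(-1)^i\left(\frac{1}{a_1}\right)^{\!\frac{i(i-1)}{2}}L^{\Delta^i}_{u_i}(\cdot)\right\}\eta^{\sigma^r}(t)\,\Delta t=0.
\end{equation}

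The main obstacle is showing that all the boundary material created in the first two moves of every pass — each bracket $\left[\,\cdots\,\right]_{t=a}^{t=\rho^{r+k-1}(b)}$ and each companion term $\mu(\rho^{r+k-1}(b))\{\cdots\}$ — vanishes, just as (\ref{rui7}) and the computations around it do for $r=3$. At the lower limit $t=a$ this is routine: by (\ref{transfor}) and (\ref{rui-1}) every quantity $\eta^{\sigma^{p}\Delta^{q}}(a)$ that occurs is an $a_1$-linear combination of $\eta^{\Delta^{q}}(a),\ldots,\eta^{\Delta^{p+q}}(a)$ with $p+q\le r-1$, all zero because $\eta$ is admissible. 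At the upper limit $t=\rho^{r+k-1}(b)$ I would separate the left-dense case, where all the $\rho^{j}(b)$ equal $b$ and every boundary term vanishes by continuity, from the left-scattered case; in the latter, Remark~\ref{remark0} makes $\rho(b),\ldots,\rho^{2r-1}(b)$ all right-scattered, so (\ref{derdiscr}) applies at each of them, and, starting from $\eta^{\Delta^{l}}(\rho^{r-1}(b))=0$ for $l=0,\ldots,r-1$, one propagates these equalities forward along the finitely many isolated points $\rho^{j}(b)$ (using (\ref{derdiscr}) and (\ref{rui-1})) to obtain the vanishing of almost all the $\eta^{\sigma^{p}\Delta^{q}}(\rho^{j}(b))$ appearing. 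At each pass exactly one integration-by-parts bracket and one companion $\mu$-piece, both descended from the term $L_{u_r}\eta^{\Delta^r}$, fail to vanish individually; they cancel each other through an identity of the type $\eta^{\Delta^{q}\sigma}(\rho^{j}(b))=\eta^{\Delta^{q}}(\rho^{j-1}(b))=0$, the cancellation closing up precisely because of hypothesis (H) (it uses, for instance, $\mu(\rho^{j}(b))=a_1\mu(\rho^{j+1}(b))$), which is ultimately the reason the conclusion can fail on an unrestricted time scale. Keeping this bookkeeping uniform in $r$ and $k$ is the delicate point.

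Finally I would generalize Lemma~\ref{lem3} to arbitrary $r$: if $f$ is continuous on $[a,\rho^{2r}(b)]$ and $\int_a^{\rho^{2r-1}(b)}f(t)\,\eta^{\sigma^r}(t)\,\Delta t=0$ for every admissible variation $\eta$, then $f\equiv 0$ on $[a,\rho^{2r}(b)]$. The proof copies the one for $r=3$: in the right-dense case one uses a localized variation built from $(t-t_0+\delta)^{N}(t-t_0-\delta)^{N}$ (or $(t-t_0)^{N}(t-t_0-\delta)^{N}$ if $t_0=a$) with $N\ge 2r+2$, so that $\eta\in C^{2r}$ and the required boundary data vanish, and in the right-scattered case the variation with $\eta^{\sigma^r}(t_0)=1$ and zero elsewhere, evaluated via (\ref{sigma}). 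Applying this lemma to (\ref{eq:plan-almostdone}) gives that the bracketed sum vanishes on $[a,\rho^{2r}(b)]$, which is the Euler-Lagrange equation (\ref{eqrder}).
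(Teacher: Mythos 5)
Your proposal is correct and follows essentially the same route as the paper, which itself only states that the general case is obtained ``following exactly the same steps'' as in Lemma~\ref{lem3} and Theorem~\ref{thm0}; your plan is a faithful and correctly bookkept execution of that generalization (the first-variation formula, the $r$-fold integration by parts with interval enlargement via (\ref{sigma}), the commutation identity $\eta^{\sigma^{p}\Delta^{q}\sigma}=a_1^{-q}\eta^{\sigma^{p+1}\Delta^{q}}$ yielding the exponent $i(i-1)/2$, the vanishing/cancellation of boundary terms, and the extension of Lemma~\ref{lem3} with exponent $N\ge 2r+2$ all match the $r=3$ computation).
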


\begin{remark}
The factor $\left(\frac{1}{a_1}\right)^{\frac{(i-1)i}{2}}$ in
(\ref{eqrder}) comes from the fact that, after each time we apply
the integration by parts formula, we commute successively $\sigma$
with $\Delta$ using (\ref{rui-1}) (see formulas (\ref{rui12}) and
(\ref{rui121})), doing this $\sum_{j=1}^{i-1}j=\frac{(i-1)i}{2}$
times for each of the parcels within the integral.
\end{remark}

%%%%%%%%%%%%%%%%%%%%%%%%%%%%%%%%%%%%%%%%%%%%%%

\section*{Acknowledgments}

This work is part of the first author's PhD project and was
partially supported by the Control Theory Group (cotg) of the
Centre for Research on Optimization and Control (CEOC), through
the Portuguese Foundation for Science and Technology (FCT),
cofinanced by the European Community Fund FEDER/POCI 2010. The
authors are grateful to Dorota Mozyrska and Ewa Pawluszewicz for
inspiring discussions during the Workshop on Mathematical Control
Theory and Finance, Lisbon, 10-14 April 2007, where some
preliminary results were presented; to an anonymous referee for helpful comments.

%%%%%%%%%%%%%%%%%%%%%%%%%%%%%%%%%%%%%%%%%%%%%%

%%%%%%%%%%%%%%%%%%%%%%%%%%%%%%%%%%%%%%%%%%%%%%

\printindex

\end{document}